\titleformat{\section}{\normalfont\Large\bfseries}{\thesection.}{1em}{}
\theoremstyle{plain}
\newtheorem{thm}{Theorem}[section]
\newtheorem{prop}[thm]{Proposition}
\theoremstyle{definition}
\newtheorem{defn}{Definition}[section]
\title{Weight Enumerators of codes over $\mathbb{F}_2$ and over  $\mathbb{Z}_4$}
\author[1,2*]{A.K.M. Selim Reza}
\author[2]{Manabu Oura}
\author[3]{Nur Hamid}
\affil[1]{\small Department of Mathematics, Khulna University of Engineering \& Technology, Bangladesh}
\affil[2]{Graduate School of Natural Science and Technology, Kanazawa University, Japan}
\affil[3]{Mathematics Education, Nurul Jadid University, Paiton, Probolinggo, Indonesia}
\affil[*]{Corresponding author: \texttt{selim\_1992@math.kuet.ac.bd}}
\date{}  
\begin{document}

\maketitle

\begin{abstract}
Weight enumerators are important tools for deciphering the algebraic structure of the related code spaces and for understanding group actions on these spaces. Our study focuses on symmetrized weight enumerators of pairs of Type II codes over the finite field $\mathbb{F}_{2}$ and the ring $\mathbb{Z}_{4}$. These pairs have been examined as invariants for a specified group. In particular, we concentrate on the scenarios where the space of the invariant ring is of degree 8 and 16. Our findings show that in certain situations, the ring produced by the symmetrized weight enumerators precisely matches with the invariant ring of the designated group. This coincidence points to a profound relationship between the invariant ring's structure and the algebraic characteristics of the weight enumerators.

\end{abstract}

\textbf{Keywords}: Type II Code, Symmetrized weight enumerator, Invariant ring.\\

\noindent\textbf{Mathematics Subject Classification (2020):} Primary 94B05; Secondary 05E99.

\section{Introduction}
Following the groundbreaking work of Gleason \cite{gleason1971weight}  the relationships between coding theory, the invariant theory of various finite groups, and modular forms were elucidated by Brou\'{e}-Enguehard \cite{broue1972polynomes}, Duke \cite{duke1993codes}, Runge \cite{runge1996codes}, Bannai et al \cite{bannai1999type}. These investigations demonstrate the relationship between combinatorics and number theory.

The algebraic method of computing rings of Hilbert modular forms for a specific generalized paramodular group was demonstrated by Runge \cite{runge1996symmetric}.

Later, Oura \cite{oura1999codes} constructed a paramodular form using coding theory.
He presented a theoretical argument in his writing.
The assertion was not backed up by any instances. His concepts serve as the basis for our research.
To substantiate his claim, we primarily investigate Type II codes of lengths 8 and 16 over $\mathbb{F}_{2}$ and $\mathbb{Z}_{4}$ and construct weight polynomials of degrees 8 and 16.
It seems that weight enumerators of codes can create the invariant ring of a finite group $H$ mentioned in Section \ref{sec:result}.

Numerous research works have been carried out on Type II codes over $\mathbb{Z}_{2k}$. It was shown in \cite{bannai1999type} that there is a Type II \(\mathbb{Z}_{2k} \) code of length $n$ if and only if $n$ is divisible by 8. 
Klemm \cite{klemm1989selbstduale}, followed by Conway and Sloane \cite{conway1993self}, examined the requirements met by the weight enumerator of self-dual codes defined over the ring of integers modulo four.
In \cite{conway1993self}, Conway and Sloane mentioned four Type II codes, each of type $4^4, 4^32^2, 4^22^3, \text{ and } 4.2^6$ of length 8 including the code $\mathscr{K}_{4m}, m\geq 1$, introduced by Klemm in \cite{klemm1989selbstduale}. 
Gaborit \cite{gaborit1996mass} discovered a mass formula for both the Type II codes, which are only present at lengths divisible by eight.
Later, in \cite{pless1997all}, Pless et al. demonstrated 133 inequivalent Type II $\mathbb{Z}_{4}$ codes of length 16 using a new computer algorithm.
Ten of these codes are direct sums of the four Type II codes of \cite{conway1993self} and the remaining codes are discovered to be indecomposable. The generator matrices of the codes that we employ in \ref{sec:result} are available in \cite{conway1993self}. We refer to \cite{nebe2006self} for the definition used in the section \ref{sec:preliminaries}. 

Throughout the paper, we use $\mathbb{Z}, \mathbb{Q}, \mathbb{C}$ as the sets of integers, rationals, and complex numbers respectively. We put \(\mathbb{Z}_{l} = \mathbb{Z}/{l\mathbb{Z}}\). Though, $\mathbb{Z}_{2} \text{ and } \mathbb{F}_{2}$ represent the finite field with elements ${0,1}$ and the ring of integer modulo $2$ respectively, they are same in terms of structure and in coding theory they are interchangeable , hence we consider \( \mathbb{Z}_{2} = \mathbb{F}_{2}\). We sometimes regard \( \mathbb{Z}_{l} = \{0,1,\ldots,l-1\}\) as integers.
We denote \( exp({2\pi \sqrt{-1}}\; \cdot) \) by \( \mathbf{e}(\cdot) \), \( A[B] = B^T A B \) where \( B^T \) denotes the transpose of \( B \), and $\delta_{ij}$ is Kronecker delta which has its usual meaning. 

We provide definitions for all terminology in Section \ref{sec:preliminaries} in order to prevent ambiguities and enhance understanding of the subjects covered in later parts. We present our primary findings in Section \ref{sec:result}, together with a detailed analysis and a focus on the important discoveries that support the goals of our investigation as a whole.

\section{Preliminaries} \label{sec:preliminaries}
The direct product of \(n\) copies of the cyclic group \(\mathbb{Z}_{2k}\) is denoted by \(\mathbb{Z}_{2k}^n\).
A subset of \( \mathbb{Z}^n_{2k} \) is called a linear code \( \mathscr{C} \) of length \( n \)  if it forms an additive subgroup of \( \mathbb{Z}^n_{2k} \). An element of $\mathscr{C}$ is called a codeword of $\mathscr{C}$. A generator matrix of $\mathscr{C}$ is a matrix whose rows generate $\mathscr{C}$.   If $ x = (x_1, x_2, \ldots, x_n) \text{ and } y = (y_1, y_2, \ldots, y_n) \in \mathbb{Z}_{2k}^n $ then the inner product of $x$ and $y$ is defined as follows:
\[
\langle x, y \rangle = x_1 y_1 + x_2 y_2 + \cdots + x_n y_n \pmod{2k}.
\]

The dual code of \( \mathscr{C} \), denoted by \( \mathscr{C}^\perp \), is defined as \( \mathscr{C}^\perp = \{ x \in \mathbb{Z}^n_{2k} \mid \langle x, y \rangle = 0 \pmod{2k} \quad  \forall  y \in \mathscr{C}\} \).
If  \( \mathscr{C} = \mathscr{C}^\perp \) then \( \mathscr{C} \) is said to be self dual. 
We say \( \mathscr{C}\) is Type II if \( \mathscr{C}\) is self dual and  \( \langle x, x \rangle = 0 \pmod{4k} \) for any element \( x \in \mathscr{C} \).

\begin{defn}
    An invariant ring of a finite subgroup \( G \) of $GL(n,\mathbb{C})$ acting on a polynomial ring \( \mathbb{C}[x_1, x_2, \ldots, x_n] \) over $\mathbb{C}$ consists of all polynomials that remain unchanged under the action of every element of \( G \).
    It is denoted by \( \mathbb{C}[x_1, x_2, \ldots, x_n]^G \).
    The action is defined as  
$$
A\cdot f(x_1, x_2, \ldots, x_n) = f\left(\sum_{1 \leq j \leq n} A_{1j} x_j, \ldots, \sum_{1 \leq j \leq n} A_{nj} x_j \right)
$$
where  \( f  \in \mathbb{C}[x_1, x_2, \ldots, x_n] \) and $A= (A_{ij})_{1\leq i,j \leq n}$.
\end{defn}

There exists a homogeneous system of parameter ${\theta_1,\; \theta_2, \ldots, \theta_n}$ such that the invariant ring \( \mathbb{C}[x_1, x_2, \ldots, x_n]^G \) is finitely generated free $\mathbb{C}[\theta_1,\; \theta_1, \ldots, \theta_n]$- module.
It has the Hironaka decomposition, \( \displaystyle \mathbb{C}[x_1, x_2, \ldots, x_n]^G =  \bigoplus_{1 \leq m \leq s}g_m\mathbb{C}[\theta_1,\; \theta_2, \ldots, \theta_n]\)  with $g_1 = 1$.
This invariant ring is a graded ring and its dimension formula is defined by $\phi_G(t)= \sum_{d\geq 1}\dim\mathbb{C}[x_1, x_2, \ldots, x_n]_d^G\;t^d $ where  $\mathbb{C}[x_1, x_2, \ldots, x_n]_d^G$ represents the number of linearly independent invariant polynomials of degree \( d \).
The dimension formula for the Hironaka decomposition given in the above form is 
$$
\phi_G(t)=\sum_{A\in G}\frac{1+t^{\text{deg}{(g_2)}}+\ldots +t^{\text{deg}{(g_s)}}} {(1-t^{\text{deg}{(\theta_1)}})\ldots (1-t^{\text{deg}{(\theta_n)}})}.
$$
\noindent By Molien it is given by
\[
\phi_G(t) = \frac{1}{|G|} \sum_{\lambda \in G} \frac{1}{\det(I - t \cdot \lambda)}
\] 
where \( I \) is the identity matrix and \( t \) is a formal variable.

Let \( k_1, k_2, \ldots, k_g \) be an ordered sequence of positive integers such that \( k_i \mid k_{i+1} \) for \( i = 1, 2, \ldots, g-1 \) and \( R = \mathbb{Z}_{2k_1} \times \mathbb{Z}_{2k_2} \times \cdots \times \mathbb{Z}_{2k_g} \) and let \(\alpha\) be a positive integer such that \(\alpha = k_1\) and \(\alpha_i = \frac{k_i}{\alpha} \) for \( i = 1, 2, \ldots, g \). We set \( D = \text{diag} (\alpha_1, \alpha_2, \ldots, \alpha_g) \). Then we can form a subgroup  $G$ of \( GL\left(2^{g}(\alpha^g \alpha_1 \alpha_2 \ldots \alpha_{g}), \mathbb{C}\right) \) whose generators are \( \chi_\alpha,\; \xi_U, \;\eta_S, \; \; \zeta_n \) and it operates on $\mathbb{C}[x_a \mid a \in R ]$. We consider the generator \( \zeta_n = \mathbf{e}(1/8)*I_n \), where $I_n$ represents identity matrix whose order is as equal as others generators of $G$. We define the other generators of $ G $ using the following conditions:  
\begin{equation}
\label{eq:T}
\chi_\alpha = \mathbf{e}\left(1/8\right)^g \left(2^g \alpha^g \alpha_1 \alpha_2 \dots \alpha_g\right)^{-1/2} \left(\mathbf{e}\left( \langle D^{-1}a,b \rangle /2\alpha\right)\right)_{a , b \in R }
\end{equation}

Let \(\Omega(D)\) be a subgroup of \(\mathrm{GL}(g, \mathbb{Z})\) consisting with the set of elements \(U \in \mathrm{GL}(g, \mathbb{Z})\) such that \(D^{-1}AD\) is an integer matrix. We set $\xi_U$ as follows:
\begin{align}
&\Omega(D) = \left\{ (a_{ij}) \in \mathrm{GL}(g, \mathbb{Z}) \mid a_{ji} \equiv 0 \pmod{( \alpha_i/\alpha_j )} \text{ for } 1 \leq j < i \leq g \right\} \nonumber \\
& \xi_{U} = \left(\sqrt{\det(U)} \, \delta_{U_{a,b}}\right)_{a,b \in R} \quad \text{for } U \in \Omega(D) \label{eq:Omega}
\end{align}

Let \(\Lambda(D)\) be the set of symmetric elements \( S \in \mathrm{Mat}(g, \mathbb{Q}) \) such that \( SD \) is an integer matrix and we set \( \eta_S \) as follows: 
\begin{align}
& \Lambda(D) = \left\{ (S_{ij}) \in \mathrm{Mat}(g, \mathbb{Q}) \mid S_{ij} = S_{ji} \in \frac{1}{k_i}\mathbb{Z} \text{ for } 1 \leq i \leq j \leq g \right\} \nonumber \\
& \eta_S = \mathrm{diag} \left( \mathbf{e} ( S[a]/4\alpha) \right) \quad \text{with } a \in R \quad \text{for } S \in \Lambda(D)
\label{eq:Lambda}
\end{align}

\noindent In order to introduce the symmetrized weight enumerators, we use the equivalency relation $a \sim b \Leftrightarrow a = b \text{ or } a = -b $ on \( R \) where \( a, b \in R \). We put \( \bar{R} = R / \sim \). To examine the invariance characteristics of symmetrized weight enumerators we refer to \cite{runge1996symmetric} , \cite{oura1999codes}.
\begin{defn}
    The symmetrized weight enumerators of the codes $ \mathscr{C}_1,\mathscr{C}_2,...,\mathscr{C}_g $ is defined by 
\begin{equation}
\mathscr{W}_{(\mathscr{C}_1, \mathscr{C}_2, \ldots, \mathscr{C}_g)}(x_{\bar{a}} \text{ with } \bar{a} \in \bar{R}) = \sum_{c_1 \in \mathscr{C}_1, \, c_2 \in \mathscr{C}_2, \, \ldots, \, c_g \in \mathscr{C}_g} \prod_{\bar{a} \in \bar{R}} x_{\bar{a}}^{n_{\bar{a}}(c_1, c_2, \ldots, c_g)}  
 \label{eq:SWE}
\end{equation}
where \( n_{\bar{a}}(c_1, c_2, \ldots, c_g) \) designates the number of \( i \) such that \( \bar{a} = \overline{(c_{1i}, c_{2i}, \ldots, c_{gi})} \).
\end{defn}

\noindent Now let \( g = (g_{ab})_{a, b \in R} \) and $\phi$  be a function define as 
\begin{equation}
    \phi(g) = \left( \sum_{d \in R \mid \overline{d} = \overline{b}} g_{ad} \right)_{\overline{a}, \overline{b} \in \overline{R}}
    \label{eq:six}
\end{equation}

Applying the function on equations \eqref{eq:T}, \eqref{eq:Omega}, \eqref{eq:Lambda} we find the symmetrized matrices $\phi(\chi_\alpha),\; \phi(\xi_U),\; \phi(\eta_S) $ respectively. Now we form  $H = \langle \phi(\chi_\alpha),\; \phi(\xi_U),\; \phi(\eta_S),\; \phi(\zeta_n)\rangle $ which is a subgroup of $ GL\left(2^{g-1}(\alpha^g \alpha_1 \alpha_2 \ldots \alpha_{g}+1),\mathbb{C}\right) $ that operates naturally on $\mathbb{C}[x_{\overline{a}}] = \mathbb{C}[x_{\overline{a}} \mid \overline{a} \in \overline{R}]$, where \( U \) and \( S \) travel through \( \Omega(D) \) and \( \Lambda(D) \) respectively.

\section{Results} \label{sec:result}

We consider for \( g = 2 \) and let, \( k_1 = 1 \), \( k_2 = 2 \) to get \( R = \mathbb{F}_2 \times \mathbb{Z}_4 \). We set  \( \alpha = 1 \). Thus we get \( \alpha_1 = 1 \), \( \alpha_2 = 2 \) as diagonal entries of $D$. To obtain the generators of $G$, we consider a few number of matrices  $U\in \Omega(D)$ and $S\in \Lambda (D)$. We use Sage-math \cite{sagemath} to form them. However, for the sake of simplicity, keeping the group order unchanged we simply take the following two matrices from $U$ and $S$. 
$$
u_1 =
\begin{bmatrix}
-1 & -1 \\
0 & -1
\end{bmatrix} 
\text {, }
u_2 =
\begin{bmatrix}
-1 & -1 \\
-2 & -1
\end{bmatrix}
\in \Omega(D) 
\quad
s_1 =
\begin{bmatrix}
1 & 1 \\
1 & 1/2
\end{bmatrix} 
\text { , }
s_2 =
\begin{bmatrix}
0 & 0 \\
0 & 1/2
\end{bmatrix}
\in \Lambda (D)
$$

\noindent With all of the above parameters, we obtain the following $\chi_\alpha$ from \eqref{eq:T}, $\xi{u_1},\; \xi{u_2}$ from \eqref{eq:Omega}, and $\eta_{s_1}, \eta_{s_2}$ from \eqref{eq:Lambda}.
Here $z$ denotes the $8$-th root of unity.\\

\noindent\resizebox{\textwidth}{!}{
\(
\begin{array}{c}
\chi_\alpha=
\begin{bmatrix}
\frac{1}{4}z^3 + \frac{1}{4}z & \frac{1}{4}z^3 + \frac{1}{4}z & \frac{1}{4}z^3 + \frac{1}{4}z & \frac{1}{4}z^3 + \frac{1}{4}z & \frac{1}{4}z^3 + \frac{1}{4}z & \frac{1}{4}z^3 + \frac{1}{4}z & \frac{1}{4}z^3 + \frac{1}{4}z & \frac{1}{4}z^3 + \frac{1}{4}z \\
\frac{1}{4}z^3 + \frac{1}{4}z & \frac{1}{4}z^3 - \frac{1}{4}z & -\frac{1}{4}z^3 - \frac{1}{4}z & -\frac{1}{4}z^3 + \frac{1}{4}z & \frac{1}{4}z^3 + \frac{1}{4}z & \frac{1}{4}z^3 - \frac{1}{4}z & -\frac{1}{4}z^3 - \frac{1}{4}z & -\frac{1}{4}z^3 + \frac{1}{4}z \\
\frac{1}{4}z^3 + \frac{1}{4}z & -\frac{1}{4}z^3 - \frac{1}{4}z & \frac{1}{4}z^3 + \frac{1}{4}z & -\frac{1}{4}z^3 - \frac{1}{4}z & \frac{1}{4}z^3 + \frac{1}{4}z & -\frac{1}{4}z^3 - \frac{1}{4}z & \frac{1}{4}z^3 + \frac{1}{4}z & -\frac{1}{4}z^3 - \frac{1}{4}z \\
\frac{1}{4}z^3 + \frac{1}{4}z & -\frac{1}{4}z^3 + \frac{1}{4}z & -\frac{1}{4}z^3 - \frac{1}{4}z & \frac{1}{4}z^3 - \frac{1}{4}z & \frac{1}{4}z^3 + \frac{1}{4}z & -\frac{1}{4}z^3 + \frac{1}{4}z & -\frac{1}{4}z^3 - \frac{1}{4}z & \frac{1}{4}z^3 - \frac{1}{4}z \\
\frac{1}{4}z^3 + \frac{1}{4}z & \frac{1}{4}z^3 + \frac{1}{4}z & \frac{1}{4}z^3 + \frac{1}{4}z & \frac{1}{4}z^3 + \frac{1}{4}z & -\frac{1}{4}z^3 - \frac{1}{4}z & -\frac{1}{4}z^3 - \frac{1}{4}z & -\frac{1}{4}z^3 - \frac{1}{4}z & -\frac{1}{4}z^3 - \frac{1}{4}z \\
\frac{1}{4}z^3 + \frac{1}{4}z & \frac{1}{4}z^3 - \frac{1}{4}z & -\frac{1}{4}z^3 - \frac{1}{4}z & -\frac{1}{4}z^3 + \frac{1}{4}z & -\frac{1}{4}z^3 - \frac{1}{4}z & -\frac{1}{4}z^3 + \frac{1}{4}z & \frac{1}{4}z^3 + \frac{1}{4}z & \frac{1}{4}z^3 - \frac{1}{4}z \\
\frac{1}{4}z^3 + \frac{1}{4}z & -\frac{1}{4}z^3 - \frac{1}{4}z & \frac{1}{4}z^3 + \frac{1}{4}z & -\frac{1}{4}z^3 - \frac{1}{4}z & -\frac{1}{4}z^3 - \frac{1}{4}z & \frac{1}{4}z^3 + \frac{1}{4}z & -\frac{1}{4}z^3 - \frac{1}{4}z & \frac{1}{4}z^3 + \frac{1}{4}z \\
\frac{1}{4}z^3 + \frac{1}{4}z & -\frac{1}{4}z^3 + \frac{1}{4}z & -\frac{1}{4}z^3 - \frac{1}{4}z & \frac{1}{4}z^3 - \frac{1}{4}z & -\frac{1}{4}z^3 - \frac{1}{4}z & \frac{1}{4}z^3 - \frac{1}{4}z & \frac{1}{4}z^3 + \frac{1}{4}z & -\frac{1}{4}z^3 + \frac{1}{4}z
\end{bmatrix}
\end{array}
\)
}
\vspace{1em}

\noindent\resizebox{\textwidth}{!}{
\(
\begin{array}{cc}
\xi_{u_1}=
\begin{bmatrix}
1 & 0 & 0 & 0 & 0 & 0 & 0 & 0 \\
0 & 0 & 0 & 0 & 0 & 0 & 0 & 1 \\
0 & 0 & 1 & 0 & 0 & 0 & 0 & 0 \\
0 & 0 & 0 & 0 & 0 & 1 & 0 & 0 \\
0 & 0 & 0 & 0 & 1 & 0 & 0 & 0 \\
0 & 0 & 0 & 1 & 0 & 0 & 0 & 0 \\
0 & 0 & 0 & 0 & 0 & 0 & 1 & 0 \\
0 & 1 & 0 & 0 & 0 & 0 & 0 & 0
\end{bmatrix}
&
\quad 
\xi_{u_2}=
\begin{bmatrix}
z^2 & 0 & 0 & 0 & 0 & 0 & 0 & 0 \\
0 & 0 & 0 & 0 & 0 & 0 & 0 & z^2 \\
0 & 0 & z^2 & 0 & 0 & 0 & 0 & 0 \\
0 & 0 & 0 & 0 & 0 & z^2 & 0 & 0 \\
0 & 0 & 0 & 0 & 0 & 0 & z^2 & 0 \\
0 & z^2 & 0 & 0 & 0 & 0 & 0 & 0 \\
0 & 0 & 0 & 0 & z^2 & 0 & 0 & 0 \\
0 & 0 & 0 & z^2 & 0 & 0 & 0 & 0 \\
\end{bmatrix}
\end{array}
\)
}
\vspace{1em}

\noindent\resizebox{\textwidth}{!}{
\(
\begin{array}{cc}
\eta_{s_1}=
\begin{bmatrix}
1 & 0 & 0 & 0 & 0 & 0 & 0 & 0 \\
0 & z & 0 & 0 & 0 & 0 & 0 & 0 \\
0 & 0 & -1 & 0 & 0 & 0 & 0 & 0 \\
0 & 0 & 0 & z & 0 & 0 & 0 & 0 \\
0 & 0 & 0 & 0 & z^2 & 0 & 0 & 0 \\
0 & 0 & 0 & 0 & 0 & -z^3 & 0 & 0 \\
0 & 0 & 0 & 0 & 0 & 0 & -z^2 & 0 \\
0 & 0 & 0 & 0 & 0 & 0 & 0 & -z^3 \\
\end{bmatrix}
&
\quad 
\eta_{s_2}=
\begin{bmatrix}
1 & 0 & 0 & 0 & 0 & 0 & 0 & 0 \\
0 & z & 0 & 0 & 0 & 0 & 0 & 0 \\
0 & 0 & -1 & 0 & 0 & 0 & 0 & 0 \\
0 & 0 & 0 & z & 0 & 0 & 0 & 0 \\
0 & 0 & 0 & 0 & 1 & 0 & 0 & 0 \\
0 & 0 & 0 & 0 & 0 & z & 0 & 0 \\
0 & 0 & 0 & 0 & 0 & 0 & -1 & 0 \\
0 & 0 & 0 & 0 & 0 & 0 & 0 & z \\
\end{bmatrix}
\end{array}
\)
}
\vspace{1 em}

Now, we form the subgroup $G$ as $ G = \langle \chi_\alpha,\; \xi_{u_1},\; \xi_{u_2},\; \eta_{s_1},\; \eta_{s_2},\; \zeta_8\rangle$  and \( \left| G \right | = 589824\). Applying the function defined in \eqref{eq:six} on \( R \) we calculate the generators of H whose are given below and form $ H = \langle \phi(\chi_\alpha),\; \phi(\xi_{u_1}),\; \phi(\xi_{u_2}),\; \phi(\eta_{s_1}),\; \phi(\eta_{s_2}),\; \phi(\zeta_8) \rangle $, and find \( \left| H \right | = 294912\). \\

\noindent\resizebox{\textwidth}{!}{
\(
\begin{array}{c}
\phi(\chi_\alpha ) =
\begin{bmatrix}
\frac{1}{4}z^3 + \frac{1}{4}z & \frac{1}{2}z^3 + \frac{1}{2}z & \frac{1}{4}z^3 + \frac{1}{4}z & \frac{1}{4}z^3 + \frac{1}{4}z & \frac{1}{2}z^3 + \frac{1}{2}z & \frac{1}{4}z^3 + \frac{1}{4}z \\
\frac{1}{4}z^3 + \frac{1}{4}z & 0 & -\frac{1}{4}z^3 - \frac{1}{4}z & \frac{1}{4}z^3 + \frac{1}{4}z & 0 & -\frac{1}{4}z^3 - \frac{1}{4}z \\
\frac{1}{4}z^3 + \frac{1}{4}z & -\frac{1}{2}z^3 - \frac{1}{2}z & \frac{1}{4}z^3 + \frac{1}{4}z & \frac{1}{4}z^3 + \frac{1}{4}z & -\frac{1}{2}z^3 - \frac{1}{2}z & \frac{1}{4}z^3 + \frac{1}{4}z \\
\frac{1}{4}z^3 + \frac{1}{4}z & \frac{1}{2}z^3 + \frac{1}{2}z & \frac{1}{4}z^3 + \frac{1}{4}z & -\frac{1}{4}z^3 - \frac{1}{4}z & -\frac{1}{2}z^3 - \frac{1}{2}z & -\frac{1}{4}z^3 - \frac{1}{4}z \\
\frac{1}{4}z^3 + \frac{1}{4}z & 0 & -\frac{1}{4}z^3 - \frac{1}{4}z & -\frac{1}{4}z^3 - \frac{1}{4}z & 0 & \frac{1}{4}z^3 + \frac{1}{4}z \\
\frac{1}{4}z^3 + \frac{1}{4}z & -\frac{1}{2}z^3 - \frac{1}{2}z & \frac{1}{4}z^3 + \frac{1}{4}z & -\frac{1}{4}z^3 - \frac{1}{4}z & \frac{1}{2}z^3 + \frac{1}{2}z & -\frac{1}{4}z^3 - \frac{1}{4}z
\end{bmatrix}
\end{array}
\)
}
\vspace{1 em}

\noindent\resizebox{\textwidth}{!}{
\(
\begin{array}{cc}
\phi(\xi_{u_1}) =
\begin{bmatrix}
1 & 0 & 0 & 0 & 0 & 0 \\
0 & 0 & 0 & 0 & 1 & 0 \\
0 & 0 & 1 & 0 & 0 & 0 \\
0 & 0 & 0 & 1 & 0 & 0 \\
0 & 1 & 0 & 0 & 0 & 0 \\
0 & 0 & 0 & 0 & 0 & 1
\end{bmatrix}
&
\quad 
\phi(\xi_{u_2}) =
\begin{bmatrix}
z^2 & 0 & 0 & 0 & 0 & 0 \\
0 & 0 & 0 & 0 & z^2 & 0 \\
0 & 0 & z^2 & 0 & 0 & 0 \\
0 & 0 & 0 & 0 & 0 & z^2 \\
0 & z^2 & 0 & 0 & 0 & 0 \\
0 & 0 & 0 & z^2 & 0 & 0
\end{bmatrix}
\end{array}
\)
}
\vspace{1 em}

\noindent\resizebox{\textwidth}{!}{
\(
\begin{array}{cc}
\phi(\eta_{s_1}) =
\begin{bmatrix}
1 & 0 & 0 & 0 & 0 & 0 \\
0 & z & 0 & 0 & 0 & 0 \\
0 & 0 & -1 & 0 & 0 & 0 \\
0 & 0 & 0 & z^2 & 0 & 0 \\
0 & 0 & 0 & 0 & -z^3 & 0 \\
0 & 0 & 0 & 0 & 0 & -z^2
\end{bmatrix}
&
\quad 
\phi(\eta_{s_2}) =
\begin{bmatrix}
1 & 0 & 0 & 0 & 0 & 0 \\
0 & z & 0 & 0 & 0 & 0 \\
0 & 0 & -1 & 0 & 0 & 0 \\
0 & 0 & 0 & 1 & 0 & 0\\
0 & 0 & 0 & 0 & z & 0 \\
0 & 0 & 0 & 0 & 0 & -1
\end{bmatrix}
\end{array}
\)
}

\begin{prop}
    The dimension formula for the group $H$ is as follows 
    \begin{align}
    \Phi_{H}(t) &= \frac{1 - t^8 + 3t^{16} + 4t^{24} + 5t^{32} + 3t^{40} + 7t^{48} + 2t^{56}}{(1-t^8)^3(1-t^{24})^3}  \nonumber \\
    &= 1 + 2t^{8} + 6t^{16} + 20t^{24} + 46t^{32} + 96t^{40} + 195t^{48} + \cdots.
    \label{eq:phiG6}  
    \end{align}
\end{prop}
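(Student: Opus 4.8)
The plan is to compute $\Phi_H(t)$ directly from the Molien formula stated above,
\[
\Phi_H(t) = \frac{1}{|H|} \sum_{\lambda \in H} \frac{1}{\det(I - t\lambda)},
\]
which applies since $H$ is a finite subgroup of $GL(6,\mathbb{C})$ with $|H| = 294912$. Because $H$ is finite, every $\lambda\in H$ has eigenvalues that are roots of unity, so each summand $1/\det(I-t\lambda)$ is a rational function whose denominator is a product of cyclotomic-type factors $1-t^{m}$; reading off the orders of the generators $\phi(\chi_\alpha),\phi(\xi_{u_1}),\phi(\xi_{u_2}),\phi(\eta_{s_1}),\phi(\eta_{s_2}),\phi(\zeta_8)$ (all built from integer data together with an $8$th root of unity $z$) one sees that the exponents $m$ that occur divide a small common value, so that after clearing denominators the whole sum can be written over $(1-t^{8})^{3}(1-t^{24})^{3}$.

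First I would replace the sum over the $294912$ elements by a sum over conjugacy classes, since $\det(I-t\lambda)$ is a class function. Using the explicit matrices listed above one realises $H$ concretely (for instance in \textsf{Sage} or \textsf{GAP}), enumerates its conjugacy classes, and for a representative $\lambda_C$ of each class $C$ computes the polynomial $\det(I-t\lambda_C)\in\mathbb{C}[t]$ together with $|C|$, so that
\[
\Phi_H(t) = \frac{1}{|H|}\sum_{C}\frac{|C|}{\det(I-t\lambda_C)}.
\]
Putting this over the common denominator $(1-t^{8})^{3}(1-t^{24})^{3}$ and collecting terms yields the numerator $1-t^{8}+3t^{16}+4t^{24}+5t^{32}+3t^{40}+7t^{48}+2t^{56}$ claimed in \eqref{eq:phiG6}. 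As a consistency check, that numerator equals $24$ at $t=1$, and since $1-t^{8}\sim 8(1-t)$ and $1-t^{24}\sim 24(1-t)$ as $t\to 1$, the leading term of $\Phi_H(t)$ is $24\,(8^{3}\cdot 24^{3})^{-1}(1-t)^{-6}=\tfrac{1}{294912}(1-t)^{-6}$, in agreement with $1/|H|$ and with the action being on $6$ variables; expanding the rational function as a power series then reproduces $1+2t^{8}+6t^{16}+20t^{24}+46t^{32}+96t^{40}+195t^{48}+\cdots$.

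The main obstacle is purely computational: $|H|$ is large, so the Molien sum must be organised by conjugacy classes and carried out with a computer algebra system, and the matrices must be handled exactly over the cyclotomic field $\mathbb{Q}(z)$ with $z$ a primitive $8$th root of unity rather than numerically, since numerical rounding would destroy the cancellations that produce the clean form in \eqref{eq:phiG6}. A secondary point worth addressing is the interpretation of the denominator $(1-t^{8})^{3}(1-t^{24})^{3}$: to present it as coming from a Hironaka decomposition one should exhibit six algebraically independent invariants of $H$ of degrees $8,8,8,24,24,24$ (the three degree-$8$ invariants can be taken among symmetrized weight enumerators of Type II codes of length $8$), after which the numerator records the degrees of the secondary invariants once it is rewritten with non-negative coefficients. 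For the statement as given, however, it suffices to verify the rational-function identity produced by the Molien computation.
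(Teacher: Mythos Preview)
Your approach via the Molien formula is exactly what the paper does (it states the proposition as a computational fact, having recalled Molien's formula in the preliminaries for just this purpose), and your consistency check that the numerator evaluates to $24=8^{3}\cdot 24^{3}/|H|$ at $t=1$ is a nice touch.

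One correction to your closing remark, however: you suggest exhibiting three algebraically independent invariants of degree $8$ as primary invariants, but this is impossible. The series itself tells you that $\dim \mathfrak{R}_{8}=2$ (this is the content of the theorem immediately following the proposition), so there cannot exist three linearly independent---let alone algebraically independent---invariants of degree $8$. The presence of the negative coefficient $-t^{8}$ in the numerator already signals that $(1-t^{8})^{3}(1-t^{24})^{3}$ is \emph{not} the denominator arising from a homogeneous system of parameters; it is simply a convenient closed form for the Molien series, not a Hironaka decomposition. If you want an actual h.s.o.p.\ you must use different degrees (for instance, at most two parameters of degree $8$), and the numerator will then look different. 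Since the proposition only asserts the rational-function identity, your main argument stands; just drop or amend that final paragraph.
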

Let \( \mathfrak{R} = \mathbb{C}[a,b,c,d,e,f]^H \) represents the invariant ring of \( H \). From the dimension formula of \( H \), we have the following theorems.

\begin{thm}
    The symmetrized weight enumerators $\mathscr{W}_{(\mathscr{E}_8,\mathscr{Q}_8)}$ and $\mathscr{W}_{(\mathscr{E}_8,\mathscr{K}_8)}$ can be considered as the basis of the subspace of degree 8 of $\mathfrak{R}$.
\end{thm}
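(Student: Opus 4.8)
\textit{Proof plan.} By the dimension formula \eqref{eq:phiG6} the coefficient of $t^{8}$ is $2$, so $\dim_{\mathbb{C}}\mathfrak{R}_{8}=2$, where $\mathfrak{R}_{8}$ denotes the degree-$8$ homogeneous component of $\mathfrak{R}=\mathbb{C}[a,b,c,d,e,f]^{H}$. It therefore suffices to prove (i) that $\mathscr{W}_{(\mathscr{E}_8,\mathscr{Q}_8)}$ and $\mathscr{W}_{(\mathscr{E}_8,\mathscr{K}_8)}$ both belong to $\mathfrak{R}_{8}$, and (ii) that they are linearly independent over $\mathbb{C}$; together with the dimension count this makes them a basis of $\mathfrak{R}_{8}$. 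I would settle (i) first and (ii) afterwards.

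For (i), homogeneity of degree $8$ is immediate from \eqref{eq:SWE}, since $\mathscr{E}_8$, $\mathscr{Q}_8$ and $\mathscr{K}_8$ all have length $8$ and hence $\sum_{\bar a\in\bar R}n_{\bar a}(c_1,c_2)=8$ for every pair $(c_1,c_2)$. The essential point is $H$-invariance, and the plan is to deduce it from the invariance theory of joint weight enumerators of Type II codes (Runge \cite{runge1996symmetric}, Oura \cite{oura1999codes}). Since $\mathscr{E}_8$ is the doubly even self-dual extended Hamming code over $\mathbb{F}_2=\mathbb{Z}_{2k_1}$ and $\mathscr{Q}_8,\mathscr{K}_8$ are Type II codes over $\mathbb{Z}_4=\mathbb{Z}_{2k_2}$ from \cite{conway1993self}, the complete joint weight enumerator $W_{(\mathscr{E}_8,\mathscr{C}_2)}(x_a\mid a\in R)$, for $\mathscr{C}_2\in\{\mathscr{Q}_8,\mathscr{K}_8\}$, is fixed by the group $G=\langle\chi_\alpha,\xi_{u_1},\xi_{u_2},\eta_{s_1},\eta_{s_2},\zeta_8\rangle$. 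The symmetrization homomorphism $x_a\mapsto x_{\bar a}$ sends $W_{(\mathscr{E}_8,\mathscr{C}_2)}$ to $\mathscr{W}_{(\mathscr{E}_8,\mathscr{C}_2)}$ and intertwines the $G$-action on $\mathbb{C}[x_a\mid a\in R]$ with the $H$-action on $\mathbb{C}[x_{\bar a}\mid\bar a\in\bar R]$ through the map $\phi$ of \eqref{eq:six}; hence $\mathscr{W}_{(\mathscr{E}_8,\mathscr{C}_2)}$ is fixed by $H$, i.e.\ lies in $\mathfrak{R}$. As a cross-check one can verify invariance generator by generator: $\phi(\chi_\alpha)$ acts via the (mixed) MacWilliams identity combined with self-duality of the two codes; $\phi(\xi_{u_1})$ and $\phi(\xi_{u_2})$ permute the classes of $\bar R$ in a way stabilizing the set of codeword pairs; $\phi(\eta_{s_1})$ and $\phi(\eta_{s_2})$ act trivially because the doubly even condition on $\mathscr{E}_8$ and the congruence $\langle x,x\rangle\equiv0\pmod 8$ on $\mathscr{C}_2$ force the phases $\mathbf{e}(S[a]/4\alpha)$ to multiply to $1$ along every codeword; and $\phi(\zeta_8)=\mathbf{e}(1/8)\,I_6$ acts as $\mathbf{e}(8/8)=1$ on a degree-$8$ polynomial.

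For (ii), I would expand both polynomials explicitly from \eqref{eq:SWE} --- each is a finite sum over $|\mathscr{E}_8|\cdot 256=16\cdot 256$ codeword pairs, computed routinely in SageMath \cite{sagemath} from the generator matrices in \cite{conway1993self} --- and check that they are not proportional. This is clean: the monomial $x_{\overline{(0,0)}}^{8}$ occurs with coefficient $1$ in each enumerator (only the zero pair $(0,0)$ contributes), so proportionality would force $\mathscr{W}_{(\mathscr{E}_8,\mathscr{Q}_8)}=\mathscr{W}_{(\mathscr{E}_8,\mathscr{K}_8)}$; but the substitution that collapses the $\mathbb{F}_2$-coordinate (identifying the variable of $\overline{(0,j)}$ with that of $\overline{(1,j)}$) sends $\mathscr{W}_{(\mathscr{E}_8,\mathscr{C}_2)}$ to $16$ times the ordinary symmetrized weight enumerator of $\mathscr{C}_2$, and these differ for $\mathscr{C}_2=\mathscr{Q}_8$ versus $\mathscr{C}_2=\mathscr{K}_8$ because the two codes have distinct types (e.g.\ they contain different numbers of codewords all of whose entries lie in $\{0,2\}$). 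Hence $\mathscr{W}_{(\mathscr{E}_8,\mathscr{Q}_8)}\neq\mathscr{W}_{(\mathscr{E}_8,\mathscr{K}_8)}$, so they are linearly independent, and by $\dim_{\mathbb{C}}\mathfrak{R}_8=2$ they form a basis of $\mathfrak{R}_8$.

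The main obstacle is (i), specifically making the $H$-invariance watertight. On the first route one must confirm that the particular $G$ assembled here from $u_1,u_2\in\Omega(D)$, $s_1,s_2\in\Lambda(D)$ and $\zeta_8$ is genuinely covered by the Runge--Oura invariance theorem for the mixed ambient ring $R=\mathbb{F}_2\times\mathbb{Z}_4$; on the second route one must carry out the somewhat delicate bookkeeping with the mixed MacWilliams transform and with the quadratic forms $S[a]$ modulo $4\alpha$. Step (ii), by comparison, is a bounded and mechanical verification.
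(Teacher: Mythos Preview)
Your plan is correct and follows the same three-step skeleton as the paper: read off $\dim_{\mathbb C}\mathfrak R_8=2$ from the Molien series, show the two symmetrized weight enumerators lie in $\mathfrak R_8$, and show they are linearly independent. The execution of the last two steps, however, differs noticeably from the paper.

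For invariance, the paper does not invoke the Runge--Oura machinery at all: it simply writes out $\mathscr W_{(\mathscr E_8,\mathscr Q_8)}$ and $\mathscr W_{(\mathscr E_8,\mathscr K_8)}$ explicitly (the full polynomials are displayed) and then checks $H$-invariance directly in Magma. Your theoretical route via the $G$-invariance of the complete joint weight enumerator followed by the symmetrization $\phi$ is more conceptual and explains \emph{why} invariance holds, but you correctly flag that matching the ad hoc generators $u_1,u_2,s_1,s_2,\zeta_8$ to the general theorem requires care; the paper sidesteps this entirely by brute computation.

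For linear independence, the paper picks the two monomials $a^8$ and $b^8$ and observes that the $2\times 2$ matrix of coefficients (here $\begin{smallmatrix}1&32\\1&128\end{smallmatrix}$) is nonsingular. Your argument---both enumerators have $a^8$-coefficient $1$, so proportionality would force equality, and collapsing the $\mathbb F_2$-coordinate yields $16$ times the ordinary symmetrized weight enumerators of $\mathscr Q_8$ and $\mathscr K_8$, which differ---is a nice alternative that avoids displaying the full polynomials, though it ultimately rests on the same kind of coefficient comparison. Either route works; the paper's is quicker once the polynomials are in hand, yours is more structural.
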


\begin{proof}

According to the dimension formula, two polynomials of degree 8 exist in $\mathfrak{R}$. To find them from codes, we use $\mathscr{E}_8$ as the code over $\mathbb{F}_2$ and $\mathscr{Q}_8$, $\mathscr{K}_8$ as codes over $\mathbb{Z}_4$. These codes have the following generator matrices. \\
\resizebox{\textwidth}{!}{
\(
\begin{array}{ccc}
\mathscr{E}_8 : 
\begin{bmatrix}
1 & 1 & 1 & 1 & 0 & 0 & 0 & 0 \\
0 & 0 & 1 & 1 & 1 & 1 & 0 & 0 \\
0 & 0 & 0 & 0 & 1 & 1 & 1 & 1 \\
1 & 0 & 1 & 0 & 1 & 0 & 1 & 0
\end{bmatrix}
&
\quad 
\mathscr{Q}_8 :  
\begin{bmatrix}
0 & 0 & 1 & 1 & 0 & 2 & 1 & 3 \\
0 & 0 & 0 & 2 & 1 & 3 & 1 & 1 \\
1 & 1 & 0 & 2 & 0 & 0 & 1 & 3 \\
0 & 2 & 0 & 2 & 0 & 2 & 0 & 2 \\
0 & 0 & 0 & 0 & 0 & 0 & 2 & 2
\end{bmatrix}
&
\quad 
\mathscr{K}_8 : 
\begin{bmatrix}
1 & 1 & 1 & 1 & 1 & 1 & 1 & 1 \\
0 & 2 & 0 & 0 & 0 & 0 & 0 & 2 \\
0 & 0 & 2 & 0 & 0 & 0 & 0 & 2 \\
0 & 0 & 0 & 2 & 0 & 0 & 0 & 2 \\
0 & 0 & 0 & 0 & 2 & 0 & 0 & 2 \\
0 & 0 & 0 & 0 & 0 & 2 & 0 & 2 \\
0 & 0 & 0 & 0 & 0 & 0 & 2 & 2
\end{bmatrix}
\end{array}
\)
}

We use the variables $ a,b,c,d,e,f $ instead of \(x_{\underset{0}{0}}, x_{\underset{1}{0}}, x_{\underset{2}{0}}, x_{\underset{0}{1}},x_{\underset{1}{1}}, x_{\underset{2}{1}}\) in \eqref{eq:SWE} and calculate the symmetrized weight enumerators $\mathscr{W}_{(\mathscr{E}_8,\mathscr{Q}_8)} \text{ and } \mathscr{W}_{(\mathscr{E}_8,\mathscr{K}_8)}$ as follows. 
\vspace{0.5cm}

\(\mathscr{W}_{(\mathscr{E}_8,\mathscr{Q}_8)} = a^{8} + 32 b^{8} + 96 a^{3} b^{4} c + 4 a^{6} c^{2} + 96 a b^{4} c^{3} + 22 a^{4} c^{4} + 4 a^{2} c^{6} + c^{8} + 14 a^{4} d^{4} + 12 a^{2} c^{2} d^{4} + 14 c^{4} d^{4} + d^{8} + 576 a b^{2} c d^{2} e^{2} + 448 b^{4} e^{4} + 96 a^{3} c e^{4} + 96 a c^{3} e^{4} + 32 e^{8} + 96 b^{4} d^{3} f + 32 a^{3} c d^{3} f + 32 a c^{3} d^{3} f + 576 a^{2} b^{2} d e^{2} f + 576 b^{2} c^{2} d e^{2} f + 96 d^{3} e^{4} f + 12 a^{4} d^{2} f^{2} + 216 a^{2} c^{2} d^{2} f^{2} + 12 c^{4} d^{2} f^{2} + 4 d^{6} f^{2} + 576 a b^{2} c e^{2} f^{2} + 96 b^{4} d f^{3} + 32 a^{3} c d f^{3} + 32 a c^{3} d f^{3} + 96 d e^{4} f^{3} + 14 a^{4} f^{4} + 12 a^{2} c^{2} f^{4} + 14 c^{4} f^{4} + 22 d^{4} f^{4} + 4 d^{2} f^{6} + f^{8}\);\\

\(\mathscr{W}_{(\mathscr{E}_8,\mathscr{K}_8)}= a^{8} + 128 b^{8} + 28 a^{6} c^{2} + 70 a^{4} c^{4} + 28 a^{2} c^{6} + c^{8} + 14 a^{4} d^{4} + 84 a^{2} c^{2} d^{4} + 14 c^{4} d^{4} + d^{8} + 1792 b^{4} e^{4} + 128 e^{8} + 224 a^{3} c d^{3} f + 224 a c^{3} d^{3} f + 84 a^{4} d^{2} f^{2} + 504 a^{2} c^{2} d^{2} f^{2} + 84 c^{4} d^{2} f^{2} + 28 d^{6} f^{2} + 224 a^{3} c d f^{3} + 224 a c^{3} d f^{3} + 14 a^{4} f^{4} + 84 a^{2} c^{2} f^{4} + 14 c^{4} f^{4} + 70 d^{4} f^{4} + 28 d^{2} f^{6} + f^{8}\);\\

They are invariants under the action of $H$ by magma \cite{magma}. To ascertain whether $\mathscr{W}_{(\mathscr{E}_8,\mathscr{Q}_8)} \text{ and } \mathscr{W}_{(\mathscr{E}_8,\mathscr{K}_8)}$ are linearly independent, we consider the coefficients of the monomials $ a^8,\; b^8 $ and find a matrix of nonzero determinant. \\
Furthermore, we can directly compute the two bases of degree 8 polynomials using magma \cite{magma}, and express them in terms of $\mathscr{W}_{(\mathscr{E}_8,\mathscr{Q}_8)} \text{ and } \mathscr{W}_{(\mathscr{E}_8,\mathscr{K}_8)}$. 

\end{proof}

\begin{thm}
       The symmetrized weight enumerators $\mathscr{W}^2_{(\mathscr{E}_8,\mathscr{Q}_8)}$, $\mathscr{W}^2_{(\mathscr{E}_8,\mathscr{K}_8)}$, 
        $\mathscr{W}_{(\mathscr{E}_8\oplus\mathscr{E}_8,\mathscr{Q}_8 \oplus \mathscr{K}_8)}$, $\mathscr{W}_{(\mathscr{E}_8\oplus\mathscr{E}_8, \mathscr{K}_{16})}$, 
        $\mathscr{W}_{(\mathscr{D}_{16},\mathscr{Q}_8 \oplus \mathscr{Q}_8)}$, and $\mathscr{W}_{(\mathscr{D}_{16},\mathscr{Q}_8 \oplus \mathscr{K}_8)}$ 
        can be treated as the basis of the subspace of degree 16 of $\mathfrak{R}$. 
\end{thm}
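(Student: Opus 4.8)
The plan is to follow the template of the degree-8 proof. Since the dimension formula \eqref{eq:phiG6} gives $\dim\mathfrak{R}_{16}=6$, it suffices to produce six $H$-invariant homogeneous polynomials of degree 16 and to show they are linearly independent; they are then automatically a basis of $\mathfrak{R}_{16}$. The six candidates are built from Type II codes of length 8 and 16 over $\mathbb{F}_2$ and $\mathbb{Z}_4$: the binary codes $\mathscr{E}_8$, $\mathscr{E}_8\oplus\mathscr{E}_8$ and $\mathscr{D}_{16}$ (the Type II code $d_{16}^{+}$), and the $\mathbb{Z}_4$-codes $\mathscr{Q}_8$, $\mathscr{K}_8$, $\mathscr{Q}_8\oplus\mathscr{Q}_8$, $\mathscr{Q}_8\oplus\mathscr{K}_8$ and $\mathscr{K}_{16}$. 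First I would record explicit generator matrices for the three length-16 codes (the length-8 ones already appear above, and direct sums are immediate), checking in each case that the code is self dual and Type II so that the invariance results of Runge \cite{runge1996symmetric} and Oura \cite{oura1999codes} apply.

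Next I would compute the six symmetrized weight enumerators using the same substitution of $(a,b,c,d,e,f)$ for the variables $x_{\bar a}$ as in the degree-8 proof, via \eqref{eq:SWE}. Two of them, $\mathscr{W}^{2}_{(\mathscr{E}_8,\mathscr{Q}_8)}$ and $\mathscr{W}^{2}_{(\mathscr{E}_8,\mathscr{K}_8)}$, are simply the squares of the degree-8 enumerators already displayed, hence lie in $\mathfrak{R}$ because a product of invariants is invariant; the remaining four are genuine length-16 computations, each a finite sum over at most $2^{8}$ (binary) or a few thousand ($\mathbb{Z}_4$) codewords, to be carried out with magma \cite{magma}. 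That these four also lie in $\mathfrak{R}$ follows from the general fact that the symmetrized weight enumerator of a tuple of Type II codes is fixed by $H$; I would additionally confirm invariance directly in magma, exactly as was done in degree 8.

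Then comes linear independence. The plan is to extract from the six degree-16 polynomials their coefficient vectors on a suitably chosen list of six monomials — for example among $a^{16}$, $b^{16}$, $e^{16}$, $a^{8}d^{8}$, $d^{16}$, $f^{16}$ and the mixed monomials that occur only in the genuine length-16 enumerators (the squares $\mathscr{W}^{2}$ factor through the two length-8 blocks separately, whereas the glued enumerators carry cross terms between the blocks) — and to verify that the resulting $6\times 6$ matrix has nonzero determinant. As a cross-check I would have magma compute a basis of $\mathfrak{R}_{16}$ directly from the Molien series via the Reynolds operator and express each of the six enumerators in that basis, confirming that the $6\times 6$ change-of-basis matrix is invertible.

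The main obstacle I anticipate is bookkeeping rather than conceptual: writing the length-16 codes down correctly (genuinely Type II, with the types and names $\mathscr{D}_{16}$, $\mathscr{K}_{16}$ matching the literature) and choosing the six separating monomials so that the determinant computation is transparent. Should no convenient set of six monomials separate the enumerators, I would fall back on forming the full coefficient matrix in all degree-16 monomials in six variables and computing its rank, which magma handles directly.
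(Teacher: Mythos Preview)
Your proposal is correct and mirrors the paper's own proof almost exactly: compute the six enumerators (with $\mathscr{D}_{16}$ and $\mathscr{K}_{16}$ given by explicit generator matrices), check $H$-invariance in Magma, and establish linear independence via a $6\times 6$ coefficient matrix, with a Magma basis of $\mathfrak{R}_{16}$ as a cross-check. The only concrete detail you leave open is the choice of separating monomials; the paper uses $ac^{3}d^{3}f^{9},\ a^{2}c^{2}d^{4}f^{8},\ c^{4}d^{4}f^{8},\ d^{8}f^{8},\ ab^{2}cd^{2}e^{2}f^{8},\ b^{4}e^{4}f^{8}$, which do yield a nonsingular matrix.
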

 
\begin{proof}

To find the six invariant polynomials of degree 16, we use direct sum of the length 8 codes used in previous theorem along with $\mathscr{D}_{16}$ as the code over $\mathbb{F}_2$ \text{ and } $\mathscr{K}_{16}$ as the code over $\mathbb{Z}_4$. We give the generator matrices of \(\mathscr{D}_{16}\text{ and } \mathscr{K}_{16}\) in appendix. We calculate the symmetrized weight enumerators $\mathscr{W}^2_{(\mathscr{E}_8,\mathscr{Q}_8)}$, $\mathscr{W}^2_{(\mathscr{E}_8,\mathscr{K}_8)}$, $\mathscr{W}_{(\mathscr{E}_8\oplus\mathscr{E}_8,\mathscr{Q}_8 \oplus \mathscr{K}_8)}$, $\mathscr{W}_{(\mathscr{E}_8\oplus\mathscr{E}_8, \mathscr{K}_{16})}$, $\mathscr{W}_{(\mathscr{D}_{16},\mathscr{Q}_8 \oplus \mathscr{Q}_8)}$, and $\mathscr{W}_{(\mathscr{D}_{16},\mathscr{Q}_8 \oplus \mathscr{K}_8)}$. We do not provide those weight enumerators here because they are too big; instead, the reader can find them in \cite{selim}. By Magma all of them are invariants under the action of $H$. To verify their linear independence, we pick up the following monomials \[ac^3d^3f^9, a^2c^2d^4f^8, c^4d^4f^8, d^8f^8, ab^2cd^2e^2f^8, b^4e^4f^8 \] and using the coefficient of them we obtain a non-singular matrix which proves that all six weight enumerators are linearly independent. Furthermore, six bases of degree 16 of \(\mathfrak{R}\) can be identified directly using magma, and those can be expressed in terms of these six weight enumerators.

\end{proof}

\section*{Acknowledgments}
The first named author of this work was supported in part by funds from  Ministry of Education, Culture, Sports, Science and Technology, Japan and the second named author was supported by JSPS KAKENHI Grant number 24K06827.


\section*{Appendix}

The generating matrices of \(\mathscr{D}_{16}\text{ and } \mathscr{K}_{16}\) are as follows. 
\[
\begingroup 
\mathscr{D}_{16} = 
\setlength\arraycolsep{1pt}
\left(
\begin{array}{*{16}{c}}
1&1&1&1&0&0&0&0&0&0&0&0&0&0&0&0 \\
0&0&1&1&1&1&0&0&0&0&0&0&0&0&0&0 \\
0&0&0&0&1&1&1&1&0&0&0&0&0&0&0&0 \\
0&0&0&0&0&0&1&1&1&1&0&0&0&0&0&0 \\
0&0&0&0&0&0&0&0&1&1&1&1&0&0&0&0 \\
0&0&0&0&0&0&0&0&0&0&1&1&1&1&0&0 \\
0&0&0&0&0&0&0&0&0&0&0&0&1&1&1&1 \\
1&0&1&0&1&0&1&0&1&0&1&0&1&0&1&0
\end{array} \right),
\qquad
\mathscr{K}_{16} = \left(
\begin{array}{*{16}{c}}
1&1&1&1&1&1&1&1&1&1&1&1&1&1&1&1 \\
0&2&0&0&0&0&0&0&0&0&0&0&0&0&0&2 \\
0&0&2&0&0&0&0&0&0&0&0&0&0&0&0&2 \\
0&0&0&2&0&0&0&0&0&0&0&0&0&0&0&2 \\
0&0&0&0&2&0&0&0&0&0&0&0&0&0&0&2 \\
0&0&0&0&0&2&0&0&0&0&0&0&0&0&0&2 \\
0&0&0&0&0&0&2&0&0&0&0&0&0&0&0&2 \\
0&0&0&0&0&0&0&2&0&0&0&0&0&0&0&2 \\
0&0&0&0&0&0&0&0&2&0&0&0&0&0&0&2 \\
0&0&0&0&0&0&0&0&0&2&0&0&0&0&0&2 \\
0&0&0&0&0&0&0&0&0&0&2&0&0&0&0&2 \\
0&0&0&0&0&0&0&0&0&0&0&2&0&0&0&2 \\
0&0&0&0&0&0&0&0&0&0&0&0&2&0&0&2 \\
0&0&0&0&0&0&0&0&0&0&0&0&0&2&0&2 \\
0&0&0&0&0&0&0&0&0&0&0&0&0&0&2&2
\end{array} \right)
\endgroup
\]

\end{document}